\begin{document}

\newtheorem{theorem}{Theorem}[section]
\newtheorem{corollary}[theorem]{Corollary}
\newtheorem{definition}[theorem]{Definition}
\newtheorem{conjecture}[theorem]{Conjecture}
\newtheorem{question}[theorem]{Question}
\newtheorem{lemma}[theorem]{Lemma}
\newtheorem{proposition}[theorem]{Proposition}
\newtheorem{example}[theorem]{Example}
\newtheorem{remark}[theorem]{Remark}
\newenvironment{proof}{\noindent {\bf
Proof.}}{\rule{3mm}{3mm}\par\medskip}
\newcommand{\pp}{{\it p.}}
\newcommand{\de}{\em}

\newcommand{\JEC}{{\it Europ. J. Combinatorics},  }
\newcommand{\JCTB}{{\it J. Combin. Theory Ser. B.}, }
\newcommand{\JCT}{{\it J. Combin. Theory}, }
\newcommand{\JGT}{{\it J. Graph Theory}, }
\newcommand{\ComHung}{{\it Combinatorica}, }
\newcommand{\DM}{{\it Discrete Math.}, }
\newcommand{\ARS}{{\it Ars Combin.}, }
\newcommand{\SIAMDM}{{\it SIAM J. Discrete Math.}, }
\newcommand{\SIAMADM}{{\it SIAM J. Algebraic Discrete Methods}, }
\newcommand{\SIAMC}{{\it SIAM J. Comput.}, }
\newcommand{\ConAMS}{{\it Contemp. Math. AMS}, }
\newcommand{\TransAMS}{{\it Trans. Amer. Math. Soc.}, }
\newcommand{\AnDM}{{\it Ann. Discrete Math.}, }
\newcommand{\NBS}{{\it J. Res. Nat. Bur. Standards} {\rm B}, }
\newcommand{\ConNum}{{\it Congr. Numer.}, }
\newcommand{\CJM}{{\it Canad. J. Math.}, }
\newcommand{\JLMS}{{\it J. London Math. Soc.}, }
\newcommand{\PLMS}{{\it Proc. London Math. Soc.}, }
\newcommand{\PAMS}{{\it Proc. Amer. Math. Soc.}, }
\newcommand{\JCMCC}{{\it J. Combin. Math. Combin. Comput.}, }
\newcommand{\GC}{{\it Graphs Combin.}, }
\title{Properties of the Hyper-Wiener index as a local function\thanks{
 This work is supported by National Natural Science
Foundation of China (Nos.11531001 and 11271256), The Joint Israel-China Program
(No.11561141001), Innovation Program of Shanghai Municipal Education Commission (No.14ZZ016), Specialized Research Fund for the Doctoral Program of Higher Education (No.20130073110075), Simons Foundation (No.245307) and Training program of Lishui (No.2014RC34).  }}
\author{ Ya-Hong  Chen$^{1,2}$,  Hua Wang$^3$ , Xiao-Dong Zhang$^1$\thanks{Corresponding  author ({\it E-mail address:}
xiaodong@sjtu.edu.cn)}
\\
{\small $^1$Department of Mathematics},
{\small Lishui University} \\
{\small  Lishui, Zhejiang 323000, PR China}\\
{\small $^2$Department of Mathematics, and MOE-LSC,}
{\small Shanghai Jiao Tong University} \\
{\small  800 Dongchuan road, Shanghai, 200240,  P.R. China}\\
{\small $^3$Department of Mathematical Sciences,}
{\small Georgia Southern University }\\
{\small Statesboro, GA 30460 USA}}

\maketitle
 \begin{abstract}
 Hyper-Wiener index was introduced as one of the main generalizations of the well known Wiener index. Through the years properties of the Wiener index have been extensively studied in both Mathematics and Chemistry. The Hyper-Wiener index, although received much attention, is far from being thoroughly examined due to its complex definition. We consider the local version of the Hyper-Wiener index ($WW(G)$), defined as $ww_G(v)=\sum\limits_{u\in V(G)}(d^2(u,v)+d(u,v))$ for a vertex $v$ in a graph $G$, in trees. For established results on the Wiener index ($W(.)$), we present analogous studies on $WW(.)$. In addition to interesting observations, some conjectures and questions are also proposed.
   \end{abstract}

{{\bf Key words:} Wiener index; Hyper-Wiener index; Centroid; Extremal ratio.
 }

      {{\bf AMS Classifications:} 05C12, 05C07}.
\vskip 0.5cm

\section{Introduction}

The so called topological indices are popular descriptors of structural information that have been vigorously studied. One of the most well known such indices is the Wiener index, defined as
\begin{eqnarray}
W(G)=\frac{1}{2}\sum\limits_{v\in V(G)}\sum\limits_{u\in V(G)}d(u,v)= \frac{1}{2}\sum\limits_{v\in V(G)}w_G(v).
\end{eqnarray}
Here $w_G(v)=\sum\limits_{u\in V(G)}d(u,v)$ is generally considered as the distance function of a vertex, serving as the local version of the $W(.)$ function. Although this concept is generally known as named after the chemist Harry Wiener \cite{wiener}, the study of distance in graphs has long been of interest from pure mathematical point of view, see survey \cite{dobrynin2001,xu2014}.

Since many of the applications of topological indices and in particular the Wiener index deal with acyclic structures, the properties of the Wiener index and the local distance function of trees have been extensively studied \cite{1974adam,Barefoot1997, dobrynin2001,Jordan1869,zelinka1968}. Barefoot et al. \cite{Barefoot1997} determined
extremal values of $w_T(w)/w_T(u)$, $w_T(w)/w_T (v)$, $W(T )/w_T (v)$, and $W(T)/w_T(w)$, where $T$ is a
tree on $n$ vertices, $v$ is in the centroid of the tree $T$, and $u,w$ are leaves in $T$.
Recently, analogous questions have been considered for the number of subtrees \cite{szekely2013,szekely2014} and distance between leaves \cite{wang2014}. The resulted extremal trees are very similar to those of the Wiener index. In addition, the ``middle part" of a tree such as center \cite{Jordan1869}, centroid \cite{Jordan1869,zelinka1968}, leaf-centroid \cite{wang2015} of the tree has historically been of interest from both practical and theoretical points of view.

The Hyper-Wiener index was introduced as one of the most important generalizations of the Wiener index \cite{Randic1993}, defined as
\begin{eqnarray}
WW(G)=\sum\limits_{\{u,v\}\subseteq V(G)}\binom {d(u,v)+1}{2}= \frac{1}{4}\sum\limits_{v\in V(G)}ww_G(v),
\end{eqnarray}
where $ww_G(v)=\sum\limits_{u\in V(G)}(d^2(u,v)+d(u,v))$. Let $S_G(v)=\sum\limits_{u\in V(G)}d^2(u,v)$, then $ww_G(v)=w_G(v)+S_G(v)$. Due to the rather complex definition, the properties of $WW(.)$ and $ww_G(.)$ are far from being sufficiently studied.

Among the limited analogous results between $W(.)$ and $WW(.)$ \cite{Behtoei2011,gutman2002,zhou2004}, it is observed that these two functions behave in rather similar ways. Motivated by $W(.)$ and $w(.)$, in this paper we focus on $ww_T(.)$ of trees and present studies analogous to those of $w_T(.)$. It is not surprising to see that the analysis of $ww_T(.)$ is much more complicated than $w_T(.)$ in most cases. In addition to some interesting observations, we also propose some conjectures and questions.

\section{``Middle part'' of a tree}

It seems that the examination of the ``middle part'' of a tree first started from \cite{Jordan1869}, where $W_T(.)$ was introduced in an equivalent form through {\it branch weight}. A maximal subtree containing a vertex $v$ of a tree $T$ as an end vertex is called a branch of $T$ at $v$. The {\it weight} of a branch $B$, denoted by $bw(B)$, is the number of edges in it. The {\it centroid} of a tree $T$, denoted by $C(T)$, is the set of vertices $v$ of  $T$ for which the maximum branch weight at $v$ is minimized.

Jordan \cite{Jordan1869} has characterized the properties of the centroid of a tree as follows.

\begin{theorem}\label{th3}\cite{Jordan1869}
If $C=C(T)$ is the centroid of a tree $T$ of order $n$ then one of the following holds:\\
(i) $C=:\{c\}$ and $bw(c)\leq (n-1)/2$,\\
(ii) $C=:\{c_1,c_2\}$ and $bw(c_1)=bw(c_2)=n/2$.\\
In both cases, if $v\in V(T)\backslash C$, then $bw(v)>n/2$.
\end{theorem}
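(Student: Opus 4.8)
The plan is to analyze the branches through an edge-orientation argument, which simultaneously pins down the size of the centroid and the branch-weight bounds. First I would record the basic bookkeeping: if $v$ has degree $d$, then deleting $v$ yields $d$ components, and the branch at $v$ in the direction of a neighbor $u$ consists of that component together with $v$; hence its weight $bw$ equals the number of vertices in that component, and the branch weights at $v$ sum to $n-1$. Reading this along an edge $uv$: if the $u$-side of $T-uv$ has $a$ vertices then the branch at $v$ toward $u$ has weight $a$, while the branch at $u$ toward $v$ has weight $n-a$. I write $bw(v)$ for the maximum branch weight at $v$, so that $C(T)$ is precisely the set of minimizers of $bw(\cdot)$.

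Next I would orient the edges toward the heavier side: orient $uv$ as $v\to u$ precisely when the $u$-side has more than $n/2$ vertices, and leave an edge unoriented when both sides have exactly $n/2$ vertices (a \emph{balanced} edge, possible only for even $n$). The crucial local fact is that no vertex has out-degree larger than $1$: two out-edges at $v$ would exhibit two vertex-disjoint components of $T-v$ each of size $>n/2$, totalling more than $n$ vertices, which is impossible. Since $T$ is acyclic it contains no directed cycle, so following out-edges from any vertex must terminate at a sink (a vertex of out-degree $0$); a sink is exactly a vertex all of whose branches have weight $\le n/2$.

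The count of sinks is then forced by summing out-degrees, which equals the number of oriented edges. If there is no balanced edge, all $n-1$ edges are oriented, so exactly one vertex is a sink; call it $c$. Every other vertex emits an edge and therefore carries a branch of weight $>n/2$, while at $c$ all branches have weight $<n/2$ (strict, since a branch of weight exactly $n/2$ would make the corresponding edge balanced), giving $bw(c)\le (n-1)/2$. This is case (i). The more delicate case is (ii): here I must show that a balanced edge, when present, is \emph{unique}, and this is the main obstacle. I would handle it by a disjointness argument: if $uv$ and $xy$ were two distinct balanced edges, deleting one splits $T$ into two halves of size $n/2$, the second edge then lies inside one half, and the component of $T$ on the far side of that second edge would contain a full half plus at least one further vertex, contradicting its being balanced. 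With uniqueness in hand, $n-2$ edges are oriented, exactly two vertices are sinks, and they are precisely the endpoints $c_1,c_2$ of the balanced edge, each with maximum branch weight $n/2$. In both cases the sinks are exactly the minimizers of $bw(\cdot)$, and every vertex outside $C$ emits an out-edge and hence satisfies $bw(v)>n/2$, which is the final assertion.
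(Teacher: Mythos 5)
The paper does not actually prove this statement: it is Jordan's classical theorem, quoted verbatim with a citation to \cite{Jordan1869} and used as background for the later results on the hyper-centroid, so there is no in-paper argument to compare yours against. Judged on its own, your proof is correct and self-contained. The orientation argument --- point each edge toward the side with more than $n/2$ vertices, observe that out-degree at most $1$ follows from the disjointness of the components of $T-v$, and count sinks against oriented edges --- is a clean route to the full statement, and it correctly delivers all three conclusions at once: the integrality argument gives $bw(c)\leq (n-1)/2$ in case (i) (strictness holding because a branch of weight exactly $n/2$ would force a balanced edge), the balanced edge gives $bw(c_1)=bw(c_2)=n/2$ in case (ii), and every non-sink emits an out-edge and so carries a branch of weight exceeding $n/2$. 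Your uniqueness argument for the balanced edge is also sound: a second balanced edge lies inside one half, and the component on its far side contains the entire opposite half plus at least the endpoint of the first balanced edge on the near side, hence more than $n/2$ vertices. The one step you assert without spelling out is that in case (ii) the two sinks are \emph{precisely} the endpoints $c_1,c_2$ of the balanced edge; the out-degree count alone only says there are exactly two sinks somewhere. This closes by the same disjointness reasoning you already used: if $c_1$ had an out-edge toward some neighbor $w\neq c_2$, then the $w$-side (more than $n/2$ vertices) and the $c_2$-side of the balanced edge (exactly $n/2$ vertices) would be disjoint components of $T-c_1$, totalling more than $n$ vertices. With that one line added, the proof is complete.
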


\begin{remark}
In the context of $w_T(.)$, the centroid $C(T)$ of a tree is the set of vertices with minimum $w_T(.)$ value and it is well known that $C(T)$ contains one or two adjacent vertices.
\end{remark}

With respect to $ww_T(.)$, we define the {\it hyper-centroid} of a tree $T$, denoted by $C_w(T)$, as the set of vertices in $T$ minimizing $ww_T(.)$. The hyper-centroid is the natural analogue of the well-known concepts of center and centroid of a tree. First we show the following observation on $ww_T(.)$.

\begin{proposition}\label{lemma1}
For any three vertices $x,y,z\in V(T)$ such that $xz,yz\in E(T)$, we must have
$$2ww_T(z)<ww_T(x)+ww_T(y).$$
\end{proposition}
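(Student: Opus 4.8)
The plan is to exploit the fact that $z$ is a cut vertex and to compare the distances to $x$, $y$, and $z$ vertex by vertex, grouping the vertices of $T$ according to the branch at $z$ in which they lie. Since $x$ and $y$ are distinct neighbors of $z$, I would partition $V(T)$ into three sets: let $A$ be the set of vertices whose path to $z$ passes through $x$ (so $x\in A$), let $B$ be those whose path to $z$ passes through $y$ (so $y\in B$), and let $R=V(T)\setminus(A\cup B)$, which contains $z$ itself together with every remaining branch.

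First I would record the distance identities forced by the tree structure. For $u\in A$ the unique paths give $d(u,x)=d(u,z)-1$ and $d(u,y)=d(u,z)+1$; for $u\in B$ the symmetric relations $d(u,y)=d(u,z)-1$ and $d(u,x)=d(u,z)+1$ hold; and for $u\in R$ (including $u=z$) both $x$ and $y$ lie one step beyond $z$ on the way out, so that $d(u,x)=d(u,y)=d(u,z)+1$.

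Next I would evaluate $D:=ww_T(x)+ww_T(y)-2\,ww_T(z)$ by summing, over all $u\in V(T)$, the contribution $\bigl[d^2(u,x)+d(u,x)\bigr]+\bigl[d^2(u,y)+d(u,y)\bigr]-2\bigl[d^2(u,z)+d(u,z)\bigr]$. Writing $a=d(u,z)$, a short expansion shows that each $u\in A$ and each $u\in B$ contributes exactly $2$, while each $u\in R$ contributes $4a+4=4\bigl(d(u,z)+1\bigr)$. Summing over the three regions yields
\[
D=2|A|+2|B|+4\sum_{u\in R}\bigl(d(u,z)+1\bigr).
\]
Since $x\in A$ and $y\in B$ force $|A|\ge 1$ and $|B|\ge 1$, and the term $u=z$ already contributes $4$ to the last sum, every summand on the right is positive, so $D>0$, which is exactly the asserted inequality.

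I expect the computation to be entirely routine; the only place demanding care is the first step, namely verifying the three pairs of distance identities and checking that the partition is exhaustive with $z$ placed unambiguously in $R$ rather than double counted. The conceptual point is that the quadratic terms cancel so cleanly on $A$ and $B$ that those regions contribute only a constant, leaving a strictly positive surplus concentrated on the branch $R$ through $z$; this convexity-type surplus is what drives the strict inequality.
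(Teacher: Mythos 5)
Your proof is correct and is essentially the paper's own argument: your sets $A$, $B$, $R$ are exactly the components $T_x$, $T_y$, $T_z$ of $T-\{xz,yz\}$ used there, and your final identity $ww_T(x)+ww_T(y)-2\,ww_T(z)=2|A|+2|B|+4\sum_{u\in R}\bigl(d(u,z)+1\bigr)$ matches the paper's formula $2\bigl(|V(T_x)|+|V(T_y)|+2\sum_{p\in V(T_z)}(d(z,p)+1)\bigr)$ term for term. The only cosmetic difference is that you normalize all distances through $d(u,z)$ and sum vertexwise, whereas the paper expands $ww_T(x)$, $ww_T(y)$, $ww_T(z)$ separately over the three components; both computations and conclusions coincide.
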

\begin{proof}
Consider the connected components $T_x$, $T_y$ and $T_z$ in $T-\{xz,yz\}$ that contain $x$, $y$ and $z$, respectively. See Figure~\ref{fig:ex_xyz}.

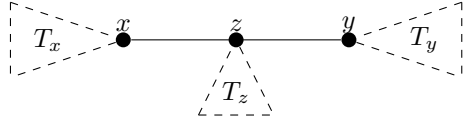
\begin{figure}[htbp]
\centering
    \begin{tikzpicture}[scale=1]
        \node[fill=black,circle,inner sep=2pt] (t1) at (0,0) {};
        \node[fill=black,circle,inner sep=2pt] (t2) at (1.5,0) {};
        \node[fill=black,circle,inner sep=2pt] (t8) at (3,0) {};

        \draw (t1)--(t8);
        \draw [dashed] (-1.5,.5)--(t1);
        \draw [dashed] (t1)--(-1.5,-.5);
        \draw [dashed] (-1.5,.5)--(-1.5,-.5);
        \draw [dashed] (4.5,.5)--(t8);
                \draw [dashed] (t8)--(4.5,-.5);
                \draw [dashed] (4.5,.5)--(4.5,-.5);
        \draw [dashed] (1,-1)--(t2);
                \draw [dashed] (t2)--(2,-1);
                \draw [dashed] (1,-1)--(2,-1);

        \node at (0,.2) {$x$};
        \node at (3,.2) {$y$};
        \node at (1.5,.2) {$z$};

        \node at (-1,0) {$T_x$};
                \node at (4,0) {$T_y$};
\node at (1.5,-.7) {$T_z$};


        \end{tikzpicture}
\caption{The vertices $x$, $y$, $z$ and the subtrees $T_x$, $T_y$, $T_z$.}\label{fig:ex_xyz}
\end{figure}

Through examining the distance from a vertex to the vetices in these components we have

$$ww_T(x)=\sum\limits_{p\in V(T_x)}(d^2(x,p)+d(x,p))+\sum\limits_{p\in V(T_z)}(d^2(z,p)+3d(z,p)+2)+\sum\limits_{p\in V(T_y)}(d^2(y,p)+5d(y,p)+6), $$

$$ww_T(y)=\sum\limits_{p\in V(T_y)}(d^2(y,p)+d(y,p))+\sum\limits_{p\in V(T_z)}(d^2(z,p)+3d(z,p)+2)+\sum\limits_{p\in V(T_x)}(d^2(x,p)+5d(x,p)+6)$$
and
$$ww_T(z)=\sum\limits_{p\in V(T_z)}(d^2(z,p)+d(z,p))+\sum\limits_{p\in V(T_x)}(d^2(x,p)+3d(x,p)+2)+\sum\limits_{p\in V(T_y)}(d^2(y,p)+3d(y,p)+2).$$

Direct calculations then yield
$$ww_T(x)+ww_T(y)-2ww_T(z)=2\left(|V(T_x)|+|V(T_y)|+2\sum\limits_{p\in V(T_z)}(d(z,p)+1)\right)>0. $$
\end{proof}

Proposition \ref{lemma1} implies that $ww(.)$ is strictly convex along any path of $T$. As an immediate consequence, we have

\begin{corollary}\label{cor:1}
Given a tree $T$:
\begin{enumerate}
\item[(i)] on any path of $T$, there are at most two adjacent vertices with the smallest $ww_T(.)$.
\item[(ii)] on any maximum path of $T$, the largest $ww_T(.)$ is obtained at a leaf.
\end{enumerate}
\end{corollary}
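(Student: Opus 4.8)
The plan is to lean entirely on the strict convexity of $ww_T(.)$ along a path supplied by Proposition~\ref{lemma1}, after which both parts reduce to standard facts about strictly convex sequences. First I would fix an arbitrary path $P: v_0 v_1 \cdots v_k$ in $T$ and abbreviate $f(i) = ww_T(v_i)$. Applying Proposition~\ref{lemma1} to each consecutive triple $(v_{i-1}, v_i, v_{i+1})$ yields $2f(i) < f(i-1) + f(i+1)$ for every interior index $1 \le i \le k-1$. Writing $\delta_i := f(i) - f(i-1)$ for the forward differences, this inequality is exactly $\delta_i < \delta_{i+1}$, so the differences form a strictly increasing sequence $\delta_1 < \delta_2 < \cdots < \delta_k$. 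This one structural observation drives everything that follows.

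For part (i) I would show that a sequence with strictly increasing differences attains its minimum on at most two adjacent indices. Suppose the minimum value $m$ is attained at $v_a$ and $v_b$ with $a < b$. Strict convexity forces $f(t)$ to lie on or below the chord interpolating $f(a)$ and $f(b)$ for every $a \le t \le b$, so $f(t) \le m$ there; since $m$ is the minimum, in fact $f(t) = m$ throughout. If $b - a \ge 2$, then $v_a, v_{a+1}, v_{a+2}$ all carry the value $m$, giving $2f(a+1) = f(a) + f(a+2)$ and contradicting the strict inequality of Proposition~\ref{lemma1}. Hence $b - a \le 1$, so the minimizers along $P$ are at most two adjacent vertices.

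For part (ii) I would first recall that both endpoints of a maximum (longest) path in a tree are necessarily leaves, since otherwise the path could be extended through an additional neighbor. It therefore suffices to locate the maximum of $f$ at $v_0$ or $v_k$. Using the increasing differences again, for any interior index $0 < t < k$ the value $f(t)$ lies strictly below the chord joining $f(0)$ and $f(k)$, whence $f(t) < \max\{f(0), f(k)\}$. Thus the largest value of $ww_T(.)$ on the path occurs at one of the two leaf-endpoints, as claimed.

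I do not anticipate a genuine obstacle once strict convexity is available; the only points needing care are the integer-sequence interpolation step in both parts (verifying that the chord bound holds at the interior lattice points and that strictness is preserved for interior indices), and the elementary observation for (ii) that a longest path must terminate in leaves. The substantive work is all carried by Proposition~\ref{lemma1}, and Corollary~\ref{cor:1} is essentially a transcription of the principle that a strictly convex function on an interval first decreases and then increases, with its maximum forced to the boundary.
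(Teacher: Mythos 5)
Your proof is correct and takes essentially the same approach as the paper: the paper states Corollary~\ref{cor:1} as an ``immediate consequence'' of the strict convexity of $ww_T(.)$ along paths established in Proposition~\ref{lemma1}, which is precisely the discrete convexity argument you carry out. You merely make explicit the standard details the paper leaves implicit --- the strictly increasing forward differences, the chord bound at interior indices, and the observation that the endpoints of a longest path in a tree are leaves --- all of which are handled correctly.
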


In return, Corollary~\ref{cor:1} yields the following statement for $C_w(T)$ analogous to that for $C(T)$.

\begin{theorem}\label{lemma2}
The subgraph induced by $C_w(T)$ is either a single vertex or two vertices joined by an edge.
\end{theorem}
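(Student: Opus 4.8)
The plan is to deduce everything from the strict convexity of $ww_T(.)$ along paths, as captured by Corollary~\ref{cor:1}(i). Since $T$ is finite, $ww_T(.)$ attains its minimum, so $C_w(T)$ is nonempty; write $m=\min_{w\in V(T)}ww_T(w)$. The whole theorem reduces to a single claim: any two distinct vertices of $C_w(T)$ are adjacent. Granting this claim, the conclusion is immediate. Indeed, if $C_w(T)$ contained three distinct vertices they would be pairwise adjacent, producing a triangle in $T$; but $T$ is a tree and hence acyclic, a contradiction. Therefore $|C_w(T)|\le 2$, and when $|C_w(T)|=2$ its two vertices are adjacent. In both cases the induced subgraph is a single vertex or an edge.

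To prove the claim, take distinct $u,v\in C_w(T)$ and let $P:=v_0v_1\cdots v_k$ be the unique $u$--$v$ path in $T$, with $v_0=u$, $v_k=v$ and $k\ge 1$. Both endpoints attain the global minimum, so $ww_T(v_0)=ww_T(v_k)=m$, while every vertex of $P$ has $ww_T$-value at least $m$. Hence the smallest $ww_T$-value along $P$ equals $m$, and it is attained at both $v_0$ and $v_k$. By Corollary~\ref{cor:1}(i), the vertices of $P$ attaining this smallest value are at most two adjacent vertices; since $v_0$ and $v_k$ are two such vertices, they must be adjacent, forcing $k=1$. Thus $u$ and $v$ are adjacent, as claimed.

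If one prefers to argue directly from Proposition~\ref{lemma1} rather than quote the corollary, the same step runs as follows: the sequence $ww_T(v_0),\dots,ww_T(v_k)$ is strictly convex, so its consecutive differences $ww_T(v_{i+1})-ww_T(v_i)$ are strictly increasing; as they sum to $ww_T(v_k)-ww_T(v_0)=0$, the first difference must be strictly negative once $k\ge 2$, giving $ww_T(v_1)<m$ and contradicting the minimality of $m$. Either route isolates the only real point of the argument, namely that two global minimizers cannot be separated by an intermediate vertex. After that, the acyclicity of $T$ does all the remaining work, so I do not anticipate a genuine obstacle; the care needed is simply in correctly transferring the global minimality of $u$ and $v$ into the statement that they realize the path-minimum, which is precisely what lets Corollary~\ref{cor:1}(i) apply.
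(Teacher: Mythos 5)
Your proof is correct and follows essentially the same route as the paper: you apply Corollary~\ref{cor:1}(i) to the unique path joining two global minimizers to force adjacency, then rule out $|C_w(T)|\geq 3$ because pairwise adjacency would create a triangle in the acyclic tree $T$. Your write-up is merely more explicit than the paper's (carefully transferring global minimality to the path-minimum, and offering the direct convexity argument via increasing consecutive differences), but no new idea is involved.
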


\begin{proof}
For any two vertices in $C_w(T)$, say $u$ and $v$, they are on a common path in $T$ and hence must be adjacent to each other by part (i) of Corollary~\ref{cor:1}. Thus $C_w(T)$ induces a complete subgraph, which is not possible in a tree $T$ if $|C_w(T)|\geq 3$.
\end{proof}

As an important part of studies on $w_T(.)$ and $C(T)$, the following fact analogous to Theorem~\ref{th3} has been frequently used.

\begin{proposition}
For a vertex $v \in C(T)$ and a vertex $u$ adjacent to $v$, we must have
\begin{equation}\label{eq:1}
n_{vu}(v) \geq n_{vu}(u),
\end{equation}
where $n_{vu}(v)$ ($n_{vu}(u)$) denotes the number of vertices closer to $v$ ($u$) than $u$ ($v$) in $T$, with equality if and only if $u\in C(T)$.
\end{proposition}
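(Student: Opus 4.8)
The plan is to translate the statement into the language of branch weights so that Jordan's characterization (Theorem~\ref{th3}) can be applied directly. First I would delete the edge $uv$ to obtain the two components $T_u$ and $T_v$ containing $u$ and $v$ respectively. Since $T$ is a tree, every vertex of $T_v$ is strictly closer to $v$ than to $u$ and every vertex of $T_u$ is strictly closer to $u$ than to $v$, so that $n_{vu}(v)=|V(T_v)|$, $n_{vu}(u)=|V(T_u)|$, and $n_{vu}(v)+n_{vu}(u)=n$. The key observation is that the branch of $T$ at $v$ in the direction of $u$ is exactly $v$ together with $T_u$; it therefore has $|V(T_u)|$ edges, i.e. its weight equals $n_{vu}(u)$. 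Symmetrically, the branch at $u$ in the direction of $v$ has weight $n_{vu}(v)$.

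For the inequality, I would invoke Theorem~\ref{th3}: since $v\in C(T)$, the maximum branch weight at $v$ is at most $n/2$ (it is at most $(n-1)/2$ in case (i) and equal to $n/2$ in case (ii)). As the branch toward $u$ is one of the branches at $v$, its weight satisfies $n_{vu}(u)\le n/2$, whence $n_{vu}(v)=n-n_{vu}(u)\ge n/2\ge n_{vu}(u)$.

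For the equality, the two directions would be handled separately. If $u\in C(T)$ as well, I would apply the inequality just proved with the roles of $u$ and $v$ exchanged; since $n_{uv}(\cdot)$ and $n_{vu}(\cdot)$ count the same two sides of the edge $uv$, this yields $n_{vu}(u)\ge n_{vu}(v)$, and together with the inequality above forces $n_{vu}(v)=n_{vu}(u)$. Conversely, if $n_{vu}(v)=n_{vu}(u)$ then both equal $n/2$; I would then compute the maximum branch weight at $u$. The branch at $u$ toward $v$ has weight $n_{vu}(v)=n/2$, while every other branch at $u$ lies inside $T_u$ and hence has at most $|V(T_u)|-1=n/2-1$ edges. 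Thus the maximum branch weight at $u$ equals $n/2$, and by the final clause of Theorem~\ref{th3} (a vertex outside $C(T)$ has maximum branch weight strictly larger than $n/2$) we conclude $u\in C(T)$.

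The routine part is the edge-deletion bookkeeping; the step that needs care is the converse of the equality case, where one must verify that $n/2$ is attained at $u$ by exactly one branch (the one toward $v$) and strictly exceeded by none, so that $u$ genuinely meets Jordan's criterion for centroid membership rather than merely coming close to it.
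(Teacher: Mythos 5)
Your proof is correct. Note, though, that the paper itself offers no proof of this proposition: it is stated as a ``frequently used'' known fact (folklore descending from Jordan's theorem), so there is no in-paper argument to compare against; your write-up supplies the missing derivation. Each step checks out: deleting $uv$ does partition $V(T)$ into $T_u$ and $T_v$ with no equidistant vertices (for adjacent $u,v$ in a tree every vertex is strictly closer to exactly one of them), the branch at $v$ toward $u$ has weight $|V(T_u)|=n_{vu}(u)$, and Theorem~\ref{th3} gives $n_{vu}(u)\leq n/2$ in both cases (i) and (ii), whence $n_{vu}(v)=n-n_{vu}(u)\geq n/2\geq n_{vu}(u)$. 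The forward equality direction by swapping roles is clean, and the converse is handled correctly: equality forces $n$ even and both counts equal to $n/2$, and the contrapositive of the final clause of Theorem~\ref{th3} ($w\notin C(T)$ implies $bw(w)>n/2$) then puts $u$ in $C(T)$. One small remark: your closing caution is stronger than necessary --- you do not need to verify that $n/2$ is attained by exactly one branch at $u$, only that $bw(u)\leq n/2$, since Jordan's last clause already excludes any non-centroid vertex from satisfying that bound; your estimate that every branch at $u$ other than the one toward $v$ has weight at most $n/2-1$ is true but could be weakened to ``at most $|V(T_u)|-1<n/2$'' without loss. As it stands the argument is complete and is the standard way this fact is established.
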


When the similar property is considered for $ww_T(.)$ and $C_w(T)$, we have the following.
\begin{theorem}\label{lemma3}
Let $T$ be a tree of order $n$. For two adjacent vertices $u,v\in V(T)$, we have
$$ww_T(v)-ww_T(u)=2(w_{T_u}(u)-w_{T_v}(v)+|V(T_u)|-|V(T_v)|).$$
 Moreover, if $v \in C_w(T)$, we must have
$$n_{vu}(v) + w_{T_v}(v) \geq n_{vu}(u) + w_{T_u}(u),$$
where $T_v$ ($T_u$) is the connected component containing $v$ ($u$) in $T-uv$, with equality if and only if $u\in C_w(T)$.
\end{theorem}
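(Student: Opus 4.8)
The plan is to prove the displayed identity first by a direct distance decomposition, entirely parallel to (and simpler than) the computation already carried out in the proof of Proposition~\ref{lemma1}; the inequality will then follow almost immediately. First I would delete the edge $uv$, splitting $V(T)$ into the components $V(T_v)$ and $V(T_u)$. For any $p\in V(T_u)$ the unique path from $v$ to $p$ crosses $uv$, so $d(v,p)=d(u,p)+1$ and hence $d^2(v,p)+d(v,p)=d^2(u,p)+3d(u,p)+2$; symmetrically, for $p\in V(T_v)$ one has $d(u,p)=d(v,p)+1$. Substituting these into the defining sums gives
$$ww_T(v)=\sum\limits_{p\in V(T_v)}\big(d^2(v,p)+d(v,p)\big)+\sum\limits_{p\in V(T_u)}\big(d^2(u,p)+3d(u,p)+2\big),$$
$$ww_T(u)=\sum\limits_{p\in V(T_u)}\big(d^2(u,p)+d(u,p)\big)+\sum\limits_{p\in V(T_v)}\big(d^2(v,p)+3d(v,p)+2\big).$$

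Subtracting the two expressions, the quadratic terms cancel term-by-term within each component, and the remaining linear terms collapse using $\sum\limits_{p\in V(T_v)}d(v,p)=w_{T_v}(v)$ and $\sum\limits_{p\in V(T_u)}d(u,p)=w_{T_u}(u)$ (distances within a component agree with distances in $T$), together with the constant contributions $2|V(T_u)|$ and $2|V(T_v)|$. This yields exactly
$$ww_T(v)-ww_T(u)=2\big(w_{T_u}(u)-w_{T_v}(v)+|V(T_u)|-|V(T_v)|\big),$$
establishing the first claim. There is no genuine obstacle here; the only thing requiring care is the bookkeeping of the linear coefficients $3$ versus $1$, which is what produces the factor $2$ and the term $|V(T_u)|-|V(T_v)|$.

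For the second claim, the key structural observation is that deleting $uv$ separates precisely the vertices closer to $v$ from those closer to $u$, so that $n_{vu}(v)=|V(T_v)|$ and $n_{vu}(u)=|V(T_u)|$. Since $v\in C_w(T)$ minimizes $ww_T(\cdot)$, we have $ww_T(v)\le ww_T(u)$, so the left-hand side of the identity is nonpositive; rearranging gives $w_{T_u}(u)+|V(T_u)|\le w_{T_v}(v)+|V(T_v)|$, and replacing the cardinalities by $n_{vu}(u)$ and $n_{vu}(v)$ yields precisely $n_{vu}(v)+w_{T_v}(v)\ge n_{vu}(u)+w_{T_u}(u)$. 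Finally, equality holds here if and only if $ww_T(v)=ww_T(u)$; since $v\in C_w(T)$ attains the minimum, this is equivalent to $ww_T(u)$ attaining the minimum as well, i.e.\ $u\in C_w(T)$, which settles the equality case.
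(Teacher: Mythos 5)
Your proof is correct and follows essentially the same route as the paper: both decompose $V(T)$ into the components of $T-uv$, exploit the fact that crossing the edge $uv$ changes distance by exactly $1$, and derive the inequality and equality case from $ww_T(v)\leq ww_T(u)$ with $n_{vu}(v)=|V(T_v)|$ and $n_{vu}(u)=|V(T_u)|$. The only cosmetic difference is that you substitute $d(v,p)=d(u,p)+1$ before expanding, whereas the paper factors $d^2(v,w)-d^2(u,w)$ as a difference of squares; the computations are otherwise identical.
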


\begin{proof}
From the definition we have
$$ww_T(v)=\sum\limits_{w\in V(T_v)}(d^2(v,w)+d(v,w))+\sum\limits_{w\in V(T_u)}(d^2(v,w)+d(v,w))$$
and
$$ww_T(u)=\sum\limits_{w\in V(T_u)}(d^2(u,w)+d(u,w))+\sum\limits_{w\in V(T_v)}(d^2(u,w)+d(u,w)).$$
Then
\begin{align*}
ww_T(v)-ww_T(u) & = \sum\limits_{w\in V(T_v)}(d(v,w)-d(u,w))(d(v,w)+d(u,w)+1)\\
& \quad\quad\quad\quad +\sum\limits_{w\in V(T_u)}(d(v,w)-d(u,w))(d(v,w)+d(u,w)+1)\\
& = -\sum\limits_{w\in V(T_v)}(d(v,w)+d(u,w)+1)+\sum\limits_{w\in V(T_u)}(d(v,w)+d(u,w)+1) \\
& = -\sum\limits_{w\in V(T_v)}(2d(v,w)+2)+\sum\limits_{w\in V(T_u)}(2d(u,w)+2)\\
& =2(w_{T_u}(u)-w_{T_v}(v)+|V(T_u)|-|V(T_v)|) \\
& =2(w_{T_u}(u)-w_{T_v}(v)+n_{vu}(u)-n_{vu}(v)).
\end{align*}
Given that $v \in C_w(T)$ and hence $ww_T(v) \leq ww_T(u)$, we have
$$n_{vu}(v) + w_{T_v}(v) \geq n_{vu}(u) + w_{T_u}(u)$$
with equality if and only if $u\in C_w(T)$.
\end{proof}

\section{Distance between $C(T)$ and $C_w(T)$}
Given Theorem~\ref{lemma2} for $C_w(T)$ and the similar statement for $C(T)$, it is interesting to explore their relations. It is easy to find trees (such as the star and the path) where $C_w(T) = C(T)$. But in general these two middle parts are not the same. A natural question is how far apart can $C_w(T)$ and $C(T)$ be in a tree on $n$ vertices. We consider exactly this question in this section.

\begin{definition}
An $r$-comet or order $n$, denoted by $T(n,r)$, is a tree resulted from identifying the center of a star on $n-r+1$ verties with one end vertex of a path on $r$ vertices (Figure~\ref{fig:com}).
\end{definition}

\begin{figure}[htbp]
\centering
    \begin{tikzpicture}[scale=1]
        \node[fill=black,circle,inner sep=1pt] (t1) at (0,0) {};
        \node[fill=black,circle,inner sep=1pt] (t2) at (1,0) {};
        \node[fill=black,circle,inner sep=1pt] (t3) at (2,0) {};
        \node[fill=black,circle,inner sep=1pt] (t8) at (3,0) {};

        \node[fill=black,circle,inner sep=1pt] (t5) at (-.5,-.5) {};
        \node[fill=black,circle,inner sep=1pt] (t6) at (-.5,0) {};
        \node[fill=black,circle,inner sep=1pt] (t7) at (-.5,.5) {};

        \draw (t1)--(t2);
        \draw (t3)--(t8);
        \draw (t5)--(t1);
        \draw (t6)--(t1);
        \draw (t7)--(t1);


        \node at (1.5,0) {$\ldots$};
        \node at (1.5,-.5) {$\underbrace{\hspace{7.8 em}}_{r \hbox{ vertices}}$};

        \end{tikzpicture}
\caption{An $r$-Comet $T(n,r)$}\label{fig:com}
\end{figure}
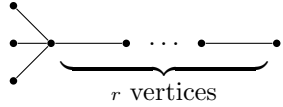

\begin{theorem}
For a tree $T$ on $n$ vertices with $v\in C_w(T)$ and $u\in C(T)$,
$$ \min d(v,u) \leq \left\lfloor\frac{n-1}{8}\right\rfloor $$
when $n$ is odd, with equality achieved when $T$ is a $\frac{n+1}{2}$-comet; and
$$  \min d(v,u) \leq \left\lfloor\frac{n^2-2n-8}{8n+8}\right\rfloor $$
when $n$ is even, with equality achieved when $T$ is a $\frac{n}{2}$-comet.
\end{theorem}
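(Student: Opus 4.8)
The plan is to reduce everything to the path joining the two centers and to exploit that $ww_T$ is strictly convex along it (Proposition~\ref{lemma1}). First I would dispose of the trivial case $C(T)=C_w(T)$ and otherwise choose $u\in C(T)$ and $v\in C_w(T)$ at minimum distance $k:=d(u,v)$, writing the connecting path as $x_0=u,x_1,\dots,x_k=v$. Deleting the edge $x_ix_{i+1}$ splits $T$ into the component $T_i^-\ni x_i$ of size $p_i$ and $T_i^+\ni x_{i+1}$ of size $q_i$, with $p_i+q_i=n$. Setting $E_i:=\tfrac12\big(ww_T(x_{i+1})-ww_T(x_i)\big)$, Theorem~\ref{lemma3} gives $E_i=w_{T_i^-}(x_i)-w_{T_i^+}(x_{i+1})+p_i-q_i$. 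The minimality of the chosen pair forces $x_{k-1}\notin C_w(T)$, so strict convexity yields $E_{k-1}<0$.

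Two estimates then drive everything. First, because $u$ is a centroid vertex and the branch at $u$ containing $v$ is not a branch toward another centroid vertex, its weight satisfies $q_0\le\lfloor (n-1)/2\rfloor$; moreover, using that among rooted trees on $m$ vertices the sum of distances from the root is at most $\binom{m}{2}$ (a path) and at least $m-1$ (a star), I would bound $-E_0=w_{T_0^+}(x_1)-w_{T_0^-}(x_0)-(p_0-q_0)\le\binom{q_0}{2}-(p_0-1)-(n-2q_0)$, which is increasing in $q_0$; plugging in $q_0=\lfloor (n-1)/2\rfloor$ gives $-E_0\le\frac{n^2-8n-1}{8}$ for odd $n$ and $-E_0\le\frac{n^2-10n-8}{8}$ for even $n$. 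Second, applying Proposition~\ref{lemma1} to each consecutive triple $x_{i-1},x_i,x_{i+1}$ evaluates the second difference as $E_i-E_{i-1}=p_{i-1}+q_i+2\sum_{p\in M_i}(d(x_i,p)+1)$, where $M_i$ is the off-path part at $x_i$. Since $p_{i-1}+q_i=n-|M_i|$ and the sum is at least $2|M_i|-1$, this collapses to $E_i-E_{i-1}\ge n+3|M_i|-2\ge n+1$.

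Finally I would telescope: $0>E_{k-1}=E_0+\sum_{i=1}^{k-1}(E_i-E_{i-1})\ge E_0+(k-1)(n+1)$, so $(k-1)(n+1)<-E_0$. Substituting the bounds above turns this into $k<\frac{n-1}{8}+\frac{1}{n+1}$ (odd) and $k<\frac{n(n-2)}{8(n+1)}$ (even); a short check on fractional parts (each correction term lies below $\tfrac18$, the minimum distance from $\frac{n-1}{8}$ or $\frac{n-3}{8}$ to an integer) then forces exactly $k\le\lfloor (n-1)/8\rfloor$ and $k\le\lfloor\frac{n^2-2n-8}{8n+8}\rfloor$. For equality I would compute directly in the comet $T(n,r)$, where a routine simplification gives $ww_T(y_{j+1})-ww_T(y_j)=2\big((n+1)j-\frac{r^2-r}{2}+n-2r\big)$, locating $C_w(T)$ and $C(T)=\{c\}$; every inequality above is then met with equality (path tail, star bush, $M_i=\{x_i\}$, maximal $q_0$), and optimizing over $r$ — the displacement is increasing in $r$ — selects the largest admissible comet, $r=(n+1)/2$ or $r=n/2$. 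The main obstacle is not any single step but making the extremal bounds in the two estimates hold \emph{simultaneously} with the stated constants; the decisive point is that the centroid constraint on $q_0$ and the star/path extremes are exactly compatible, which is precisely why the comet is the extremal configuration.
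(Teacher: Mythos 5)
Your proposal is correct, but it takes a genuinely different route from the paper's proof. The paper applies Theorem~\ref{lemma3} only once, at the neighbor $w$ of $v$ on the $v$--$u$ path (strictly, since minimality forces $w\notin C_w(T)$), and then bounds all three quantities in \eqref{eq:3} in one shot: $w_{T_v}(v)\le\binom{y}{2}$, $|V(T-T_v)|\ge 2x+y-1$, and $w_{T-T_v}(w)\ge\binom{x}{2}+x(x+y-1)$, the latter two coming from the centroid inequality \eqref{eq:2} at $u$ together with the degree-$2$ path; this collapses to the scalar inequality $x\le\frac{z^2-z}{4z}$ with $z=x+y\le\frac{n+1}{2}$ (resp.\ $x\le\frac{z^2-z-2}{4z+2}$ with $z\le\frac{n}{2}$), giving $x\le\frac{n-1}{8}$ exactly, with no rounding analysis needed. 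You instead make Proposition~\ref{lemma1} quantitative, $E_i-E_{i-1}=n+3|M_i|-2\ge n+1$, and telescope first differences along the whole path, using $E_{k-1}<0$ (minimality again) and the centroid bound $q_0\le\lfloor(n-1)/2\rfloor$ at the $u$-end; your handling of the bicentroid case is right, since minimality of the pair prevents the second centroid vertex from being $x_1$, so the branch toward $v$ has weight at most $n/2-1$. I checked your arithmetic: $-E_0\le\frac{n^2-8n-1}{8}$ (odd) and $\le\frac{n^2-10n-8}{8}$ (even) are correct, hence $k<\frac{n-1}{8}+\frac{1}{n+1}$ and $k<\frac{n(n-2)}{8(n+1)}=\frac{n-3}{8}+\frac{3}{8(n+1)}$, and the stated floors do follow --- though your parenthetical is slightly off: for odd $n\le 7$ the correction $\frac{1}{n+1}$ is \emph{not} below $\frac18$; what saves the check is that for odd $n$ the fractional part of $\frac{n-1}{8}$ lies in $\{0,\frac14,\frac12,\frac34\}$, so the gap to the next integer is at least $\frac14$, which exceeds $\frac{1}{n+1}$ in every case (for even $n$ your claim is fine, as $\frac{3}{8(n+1)}<\frac18$). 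The trade-off between the two arguments: the paper's one-shot bound is numerically tighter (no $\frac{1}{n+1}$ slack, so no fractional-part bookkeeping), while your telescoping buys a transparent equality analysis --- your comet formula $ww_T(y_{j+1})-ww_T(y_j)=2\bigl((n+1)j-\frac{r^2-r}{2}+n-2r\bigr)$ is correct, exhibits the constant second difference $2(n+1)$ corresponding to the case $M_i=\{x_i\}$, and genuinely locates $C_w(T)$ in $T(n,r)$, so that equality in the floor bounds is \emph{verified} rather than merely asserted, which the paper's proof does not do.
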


\begin{proof}
Let $d(v,u)=x$ be the minimum distance between vertices in $C_w(T)$ and $C(T)$, and denote by $T_v$ ($T_u$) the connected component containing $v$ ($u$) in the graph resulted from removing edges on the path connecting $v$ and $u$ (Figure~\ref{fig:ex}).

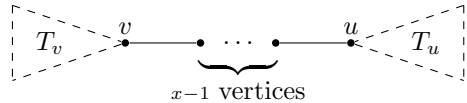
\begin{figure}[htbp]
\centering
    \begin{tikzpicture}[scale=1]
        \node[fill=black,circle,inner sep=1pt] (t1) at (0,0) {};
        \node[fill=black,circle,inner sep=1pt] (t2) at (1,0) {};
        \node[fill=black,circle,inner sep=1pt] (t3) at (2,0) {};
        \node[fill=black,circle,inner sep=1pt] (t8) at (3,0) {};

        \draw (t1)--(t2);
        \draw (t3)--(t8);
        \draw [dashed] (-1.5,.5)--(t1);
        \draw [dashed] (t1)--(-1.5,-.5);
        \draw [dashed] (-1.5,.5)--(-1.5,-.5);
        \draw [dashed] (4.5,.5)--(t8);
                \draw [dashed] (t8)--(4.5,-.5);
                \draw [dashed] (4.5,.5)--(4.5,-.5);

        \node at (0,.2) {$v$};
        \node at (3,.2) {$u$};

        \node at (-1,0) {$T_v$};
                \node at (4,0) {$T_u$};

        \node at (1.5,0) {$\ldots$};
        \node at (1.5,-.5) {$\underbrace{\hspace{3 em}}_{x-1 \hbox{ vertices}}$};

        \end{tikzpicture}
\caption{The vertices $v$, $u$ and the subtrees $T_v$, $T_u$}\label{fig:ex}
\end{figure}

First consider the case of odd $n$, \eqref{eq:1} implies that
\begin{equation}\label{eq:2}
|V(T_u)| \geq |V(T-T_u)|+1
\end{equation}
and Theorem~\ref{lemma3} implies that
\begin{equation}\label{eq:3}
|V(T_v)| + w_{T_v}(v) > |V(T-T_v)| + w_{T-T_v}(w),
\end{equation}
where $w$ is the neighbor of $v$ on the path connecting $v$ and $u$.

Let $y= |V(T_v)|$, we have
$$ {y \choose 2} \geq w_{T_v}(v) $$
with equality if and only if $T_v$ is a path. On the other hand, we have
$$ |V(T-T_v)| \geq |V(T_u)| + x-1 \geq |V(T-T_u)|+1 +(x - 1) \geq y + (x-1) + 1 + (x-1) = 2x+y-1  $$
from \eqref{eq:2}, with equality if and only if every vertex on the path connecting $v$ and $u$ is of degree 2 and $|V(T_u)|=|V(T-T_u)|+1=x+y$; and
$$ w_{T-T_v}(w) \geq {x \choose 2} + x(|V(T_u)| -1) \geq {x \choose 2} + x(x+y -1) $$
with equality if and only if every vertex on the path connecting $v$ and $u$ is of degree 2, $T_u$ is a star, and $|V(T_u)|=|V(T-T_u)|+1=x+y$.
Hence from inequality~\eqref{eq:3} we have
\begin{equation}\label{eq:4}
y + {y \choose 2} \geq |V(T_v)| + w_{T_v}(v) \geq |V(T-T_v)| + w_{T-T_v}(w) + 1 \geq (2x+y-1) + {x \choose 2} + x(x+y -1) + 1
\end{equation}
with possible equality only if $T_u$ is a star, $T-T_u$ is a path, and $|V(T_u)|=|V(T-T_u)|+1=x+y$. Letting $x+y=z$, \eqref{eq:4} is equivalent to
$$ x \leq \frac{z^2 - z}{4z} =: f(z) . $$
It is easy to check $f'(z)=\frac{1}{4} > 0$ and hence $f(z)$ achieves its maximum when $z=x+y=|V(T_u)|=\frac{n+1}{2}$. Hence
 $$\max x =\left\lfloor\frac{n-1}{8}\right\rfloor$$ when $n$ is odd and $T$ is a $\frac{n+1}{2}$-comet.

Following the same argument, if $n$ is even, we have
$$
|V(T_u)| \geq |V(T-T_u)|+2
$$
and
$$ y + {y \choose 2} \geq (2x+y) + {x \choose 2} + x(x+y) +1 $$
with possible equality only if $T_u$ is a star, $T-T_u$ is a path, and $|V(T_u)|=|V(T-T_u)|+2=x+y+1$. This simplifies to
$$ x \leq \frac{z^2 -z-2}{4z+2} $$
maximized when $z=x+y=|V(T_u)|-1=\frac{n}{2}$. Hence
 $$\max x =\left\lfloor\frac{n^2-2n-8}{8n+8}\right\rfloor$$ when $n$ is even and $T$ is a $\frac{n}{2}$-comet.
\end{proof}

\section{Extremal ratios}
As it was established that the minimum $w_T(.)$ is obtained at the centroid vertices and the maximum $w_T(.)$ is obtained at a leaf, the extremal values of $\frac{w_T(w)}{w_T(u)}$ and $\frac{w_T(w)}{w_T(v)}$ (where $v \in C(T)$, $u$ and $w$ are leaves) have been studied in \cite{Barefoot1997} along with other extremal ratios. Similarly, we have already seen that the minimum $ww_T(.)$ is obtained at the hyper-centroid vertices and the maximum $ww_T(.)$ is obtained at a leaf. In this section we explore the extremal values of the analogous ratios
$\frac{ww_T(w)}{ww_T(u)}$ and $\frac{ww_T(w)}{ww_T(v)}$ where $v \in C_w(T)$, $u$ and $w$ are leaves. These questions turned out to be rather complicated and we propose some questions.

\subsection{Extremal values of $ww_T(w)/ww_T(u)$ where $u$ and $w$ are leaves}

We start with the following simple observation.

\begin{proposition}\label{prop:1}
Let $T$ be a tree with leaves $u$ and $w$ such that the maximum $\frac{ww_T(w)}{ww_T(u)}$ is achieved. Then all internal vertices on the path connecting $u$ and $w$, except possibly for the neighbor of $u$, must be of degree 2.
\end{proposition}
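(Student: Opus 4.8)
The plan is to argue by contradiction using a single branch-relocation (a ``surgery'') that strictly increases the ratio, exploiting the fact that the map $t\mapsto t^2+t$ is strictly increasing on the nonnegative integers. First I would fix notation: write the path joining the two leaves as $u=p_0,p_1,\dots,p_k=w$, so that the internal vertices are $p_1,\dots,p_{k-1}$ and the neighbor of $u$ is $p_1$. The goal is then to show that every $p_i$ with $2\le i\le k-1$ has degree $2$.

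Next, suppose for contradiction that some $p_i$ with $2\le i\le k-1$ has degree at least $3$. Since $p_i$ already has the two path-neighbors $p_{i-1},p_{i+1}$, it has a further neighbor $b$; let $B$ be the component of $T-p_ib$ containing $b$, a nonempty branch containing none of $u,w,p_i$. I would form a new tree $T'$ by deleting the edge $p_ib$ and adding the edge $p_{i-1}b$, thereby shifting $B$ one step closer to $u$. Then $T'$ is again a tree of the same order in which $u,w$ remain leaves and the $u$--$w$ path is unchanged, so $(T',u,w)$ is a legitimate competitor for the maximum.

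The core of the argument is the comparison of $ww$-values. The key observation is that relocating $B$ leaves every distance from $u$ (resp.\ $w$) to a vertex outside $B$ unchanged, so only the $B$-terms of $ww_T(u)$ and $ww_T(w)$ are affected. For $q\in B$, writing $r(q)=d(q,b)$ for its preserved distance to the attachment vertex and using $d(u,p_{i-1})=d(u,p_i)-1$ and $d(w,p_{i-1})=d(w,p_i)+1$, one gets $d_{T'}(u,q)=d_T(u,q)-1$ and $d_{T'}(w,q)=d_T(w,q)+1$. Since $t\mapsto t^2+t$ is strictly increasing, summing over $q\in B$ yields $ww_{T'}(u)<ww_T(u)$ and $ww_{T'}(w)>ww_T(w)$, whence
\[
\frac{ww_{T'}(w)}{ww_{T'}(u)}>\frac{ww_T(w)}{ww_T(u)},
\]
contradicting maximality. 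This forces every internal vertex other than $p_1$ to have degree $2$.

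Finally, I would point out that $p_1$ is genuinely the exception: a branch attached at $p_1$ cannot be pushed any closer to $u$ without attaching it to $u$ itself, which would destroy $u$'s status as a leaf, so the surgery is simply unavailable there. The only point needing care --- the modest obstacle --- is the bookkeeping that confirms that precisely the $B$-terms change and that the reattachment shifts each $d(u,\cdot)$ down by one and each $d(w,\cdot)$ up by one; once that is checked, the conclusion is immediate from monotonicity of $t^2+t$.
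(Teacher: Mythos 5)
Your proposal is correct and follows essentially the same route as the paper: the paper also takes the $u$--$w$ path $u=u_0u_1\cdots u_r=w$, assumes some $u_i$ with $2\le i\le r-1$ has an extra neighbor $x$, and performs the identical surgery $T'=T-xu_i+xu_{i-1}$, concluding $ww_{T'}(u)<ww_T(u)$ and $ww_{T'}(w)>ww_T(w)$ and hence a strictly larger ratio, a contradiction. The only difference is that you spell out the distance bookkeeping ($d_{T'}(u,q)=d_T(u,q)-1$, $d_{T'}(w,q)=d_T(w,q)+1$ for $q$ in the moved branch, with monotonicity of $t\mapsto t^2+t$) that the paper compresses into ``it is easy to see.''
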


\begin{proof}
Let $T$ be a tree with leaves $w$ and $u$ such that $\frac{ww_T(w)}{ww_T(u)}$ is maximized among all trees on $n$ vertices.
Let $u=u_0 u_1 \cdots u_r=w$ be the $u$-$w$ path in $T$ and note that $2\leq r\leq n-1$.

Suppose, for contradiction, that for some $2\leq i\leq r-1$, the vertex $u_i$ has a neighbor $x$ different from $u_{i-1}$ and $u_{i+1}$. Let $T'$ be the tree obtained from $T$ by deleting $xu_i$ and adding $xu_{i-1}$, i.e., $T'=T-xu_i+xu_{i-1}$. It is easy to see that $ww_{T'}(w)>ww_T(w)$ and $ww_{T'}(u)<ww_T(u)$, then
$$\frac{ww_{T'}(w)}{ww_{T'}(u)}>\frac{ww_{T}(w)}{ww_T(u)},$$
a contradiction.
\end{proof}

With Proposition~\ref{prop:1}, let $T_B$ denote the connected component containing $u_1$ after removing all edges on the $u$-$w$ path. See Figure~\ref{fig:ex_tb}.

\begin{figure}[htbp]
\centering
    \begin{tikzpicture}[scale=1.3]
        \node[fill=black,circle,inner sep=1.5pt] (t1) at (0,0) {};
        \node[fill=black,circle,inner sep=1.5pt] (t2) at (1,0) {};
        \node[fill=black,circle,inner sep=1.5pt] (t3) at (2,0) {};
        \node[fill=black,circle,inner sep=1.5pt] (t8) at (3,0) {};

        \draw (t1)--(t2);
        \draw (t3)--(t8);
        \draw [dashed] (.5,-1)--(t2);
        \draw [dashed] (t2)--(1.5,-1);
        \draw [dashed] (.5,-1)--(1.5,-1);

\node at (0,.2) {$u$};
        \node at (1,.2) {$u_1$};
        \node at (2,.2) {$u_{r-1}$};
        \node at (3,.2) {$w$};

        \node at (1,-.7) {$T_B$};

        \node at (1.5,0) {$\ldots$};

        \end{tikzpicture}
\caption{The vertices $u$, $w$ and the subtrees $T_B$.}\label{fig:ex_tb}
\end{figure}
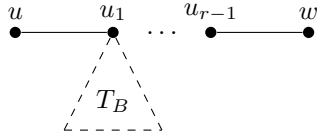

Then applying Theorem~\ref{lemma3} (repeatedly) yields
$$ ww_T(u) = ww_T(u_1)+2[W_{T_B}(u_1)+r(r-1)/2+n-2] $$
and

\begin{align*}
& ww_T(w) \\
= & ww_T(u_1)+2(r-1)w_{T_B}(u_1)+2\sum\limits_{j=2}^{r-1}d(u_1,u_j)|V(T_B)|+2\sum\limits_{k=2}^{r-2}\sum\limits_{j=k+1}^{r-1}d(u_k,u_j)\\
&\quad\quad\quad\quad\quad\quad +2\sum\limits_{j=1}^{r-1}d(u_0,u_j) -2\sum\limits_{k=2}^r\sum\limits_{j>k}^rd(u_k,u_j)+2(r-1)(n-r) \\
= & ww_T(u_1)+2[(r-1)w_{T_B}(u_1)+(r-1)(nr-r^2+2)/2] .
\end{align*}

Hence
\begin{equation*}
\begin{aligned}
\frac{ww_T(w)}{ww_T(u)}&=\frac{ww_T(u_1)+2[(r-1)w_{T_B}(u_1)+(r-1)(nr-r^2+2)/2]}{ww_T(u_1)+2[w_{T_B}(u_1)+r(r-1)/2+n-2]}\\
&=1+3\frac{2(r-2)w_{T_B}(u_1)+(r-1)(nr-r^2-r+2)-2(n-2)}{9w_{T_B}(u_1)+3S_{T_B}(u_1)+6(n-1)+(r+4)r(r-1)} .
\end{aligned}
\end {equation*}

The above formula allows quick computation of the ratio based on the information of $T_B$ alone. Computation results based on this formula suggests the following, which we post as a question.

\begin{question}\label{th1}
For leaves $w$ and $u$ in a tree $T$ of order $n\geq 8$, let the integers $k\geq1$ and $s$ be defined by $4n=k^2+s$, $0\leq s\leq2k$. Is it true that
$$\frac{ww_T(w)}{ww_T(u)}\leq 1+3\frac{(r-1)[-r^2+(n-3)r+2n]+2r-4n+6}{(r+4)r(r-1)+6(3n-2r-3)},$$
where $$r=\left\{\begin{array}{ll}
 \lfloor 2\sqrt{n}\rfloor-2, &  0\leq s\leq k-6,\\
 \lfloor 2\sqrt{n}\rfloor-1 ,  & k-5\leq s\leq 2k,
\end{array}\right.$$
with equality when $T=T(n,r)$?
\end{question}

As examples, Table~\ref{table1} shows the structures of extremal trees for small $n$. Note that when $n=7$, the extremal tree $T_1$ is obtained from $P_6$ by joining one of its middle vertices to an additional leaf. It is interesting to see that in this case (unlike all other extremal structures) the $u$-$w$ path does not form the diameter of the tree.

\setlength{\unitlength}{.1in}
\begin{table}\begin{center}
\begin{tabular}{lccc}
\hline

n & \mbox{Graph} & \mbox{Value} & \mbox{Structure of extremal trees with order n}\\ \hline
2 & $P_2$ & 1 &
\begin{picture}(15,2)
\linethickness{0.25mm}
\put(0.2,0.4){\line(1,0){3}}
\multiput(0,0)(3,0){2}{$\bullet$}
\end{picture}
\\ \hline
3 & $P_3$ & 1 &
\begin{picture}(15,2)
\linethickness{0.25mm}
\put(0.2,0.4){\line(1,0){6}}
\multiput(0,0)(3,0){3}{$\bullet$}
\end{picture}
\\ \hline
4 & $P_4,S_4$ &1&
\begin{picture}(18,3)
\linethickness{0.25mm}
\put(0.2,0.4){\line(1,0){9}}
\put(10,0.4){\line(1,0){6}}
\put(13,0.4){\line(0,1){2}}
\multiput(0,0)(3,0){4}{$\bullet$}
\multiput(9.7,0)(3,0){2}{$\bullet$}
\multiput(12.7,2)(0,0){1}{$\bullet$}
\multiput(15.7,0)(0,0){1}{$\bullet$}
\end{picture}
\\ \hline
5 &$T(5,3)$& 16/13&
\begin{picture}(15,3)
\linethickness{0.25mm}
\put(0.2,0.4){\line(1,0){9}}
\put(3.3,0.4){\line(0,1){2}}
\multiput(0,0)(3,0){4}{$\bullet$}
\multiput(3,2)(0,0){1}{$\bullet$}
\end{picture}
\\ \hline
6 &$T(6,3)$& 11/8&
\begin{picture}(15,3)
\linethickness{0.25mm}
\put(0.2,0.4){\line(1,0){9}}
\put(3.3,0.4){\line(0,1){2}}
\put(3.3,0.4){\line(-1,1){1.5}}
\multiput(0,0)(3,0){4}{$\bullet$}
\multiput(3,2)(0,0){1}{$\bullet$}
\multiput(1.4,1.7)(0,0){1}{$\bullet$}
\end{picture}
\\ \hline
7 &$T_1$& 45/29&
\begin{picture}(15,3)
\linethickness{0.25mm}
\put(7,2){$u$}
\put(15,1){$w$}
\put(0.2,0.4){\line(1,0){15}}
\put(6.3,0.4){\line(0,1){2}}
\multiput(0,0)(3,0){6}{$\bullet$}
\multiput(6,2)(0,0){1}{$\bullet$}
\end{picture}
\\ \hline
8 &$T(8,4)$& 50/29&
\begin{picture}(15,3)
\linethickness{0.25mm}
\put(0.2,0.4){\line(1,0){12}}
\put(3.3,0.4){\line(0,1){2}}
\put(3.3,0.4){\line(-1,1){1.5}}
\put(3.3,0.4){\line(1,1){1.5}}
\multiput(0,0)(3,0){5}{$\bullet$}
\multiput(3,2)(0,0){1}{$\bullet$}
\multiput(1.4,1.7)(0,0){1}{$\bullet$}
\multiput(4.4,1.6)(0,0){1}{$\bullet$}
\end{picture}
\\ \hline
9 &$T(9,4)$& 15/8&
\begin{picture}(15,3)
\linethickness{0.25mm}
\put(0.2,0.4){\line(1,0){12}}
\put(3.3,0.4){\line(0,1){2}}
\put(3.3,0.4){\line(-1,1){1.5}}
\put(3.3,0.4){\line(1,1){1.5}}
\put(3.3,0.4){\line(-2,1){1.9}}
\multiput(0,0)(3,0){5}{$\bullet$}
\multiput(3,2)(0,0){1}{$\bullet$}
\multiput(1.4,1.7)(0,0){1}{$\bullet$}
\multiput(1,1.1)(0,0){1}{$\bullet$}
\multiput(4.4,1.6)(0,0){1}{$\bullet$}
\end{picture}
\\ \hline
10 &$T(10,5)$& 95/47&
\begin{picture}(15,3)
\linethickness{0.25mm}
\put(0.2,0.4){\line(1,0){15}}
\put(3.3,0.4){\line(0,1){2}}
\put(3.3,0.4){\line(-1,1){1.5}}
\put(3.3,0.4){\line(1,1){1.5}}
\put(3.3,0.4){\line(-2,1){1.9}}
\multiput(0,0)(3,0){6}{$\bullet$}
\multiput(3,2)(0,0){1}{$\bullet$}
\multiput(1.4,1.7)(0,0){1}{$\bullet$}
\multiput(1,1.1)(0,0){1}{$\bullet$}
\multiput(4.4,1.6)(0,0){1}{$\bullet$}
\end{picture}
\\ \hline
\end{tabular}\\
\end{center}
\caption{The extremal trees of order $n$ with maximum value of $\frac{ww_T(w)}{ww_T(u)}$}\label{table1}
\end{table}
\vspace{0.2cm}

\subsection{Extremal values of $ww_T(w)/ww_T(v)$ where $w$ is leaf and $v\in C_w(T)$}

Similar to the previous section, we have

\begin{proposition}\label{prop:2}
Let $T$ be a tree with $v\in C_w(T)$ and leaf $w$ such that the maximum $\frac{ww_T(w)}{ww_T(v)}$ is achieved. Then all internal vertices on the path connecting $u$ and $w$ must be of degree 2.
\end{proposition}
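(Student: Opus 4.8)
The plan is to mirror the shifting argument of Proposition~\ref{prop:1}, but to account for the fact that the denominator vertex $v$ is constrained to lie in the hyper-centroid rather than to be a leaf. I would write the path from $v$ to $w$ as $v=u_0u_1\cdots u_r=w$, and suppose for contradiction that some internal vertex $u_i$ with $1\le i\le r-1$ has degree at least $3$. Then $u_i$ carries an off-path branch $B$, namely the component not containing any $u_j$ that is attached to $u_i$ through some edge $xu_i$. I would form $T'=T-xu_i+xu_{i-1}$, which slides the entire branch one step toward $v$; this is well defined because $i\ge 1$ guarantees that $u_{i-1}$ exists (it is $v$ itself when $i=1$), and $T'$ is again a tree on $n$ vertices.

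Next I would record the two distance effects of the slide. Every vertex $p\in B$ has its distance to $v$ decreased by exactly $1$ and its distance to $w$ increased by exactly $1$, while all other pairwise distances are unchanged. Since $d^2(\cdot)+d(\cdot)$ is strictly increasing in the distance, summing over $p\in V(B)$ gives $ww_{T'}(v)<ww_T(v)$ and $ww_{T'}(w)>ww_T(w)$. These are the analogues of the two monotonicity statements used in Proposition~\ref{prop:1}, and here they follow immediately once the per-vertex shifts are identified.

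The step that is genuinely different from the leaf/leaf case is closing the argument despite the possibility that $v\notin C_w(T')$. I would let $v''\in C_w(T')$ be any hyper-centroid vertex of $T'$; since $v''$ minimizes $ww_{T'}(\cdot)$ we have $ww_{T'}(v'')\le ww_{T'}(v)$. Because $w$ is untouched it remains a leaf of $T'$, so the pair $(v'',w)$ is an admissible instance, and combining the three inequalities yields
\[
\frac{ww_{T'}(w)}{ww_{T'}(v'')}\ \ge\ \frac{ww_{T'}(w)}{ww_{T'}(v)}\ >\ \frac{ww_T(w)}{ww_T(v)},
\]
contradicting the maximality of $ww_T(w)/ww_T(v)$. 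This also explains why, unlike Proposition~\ref{prop:1} with its exception at the neighbor of $u$, no exceptional vertex appears here: sliding a branch onto $u_0=v$ is harmless because $v$ need not stay a leaf, whereas pushing a branch onto the leaf $u$ would have destroyed it.

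The main obstacle to watch is precisely the logical structure of that last step. Extremality is taken over all triples $(T,v,w)$ with $v\in C_w(T)$ and $w$ a leaf, so one cannot simply compare $ww_{T'}(w)/ww_{T'}(v)$ against the maximum, since $v$ may no longer be a hyper-centroid vertex of $T'$; one must pass to an actual $v''\in C_w(T')$ and invoke its minimality to restore an admissible instance. Once this is handled, everything else reduces to the routine verification of the two unit distance shifts on $B$.
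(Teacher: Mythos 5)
Your proof is correct and takes essentially the same route as the paper's: slide an off-path branch one edge toward $v$, note that $ww_{T'}(w)>ww_T(w)$ while $ww_{T'}(v)<ww_T(v)$, and then pass to an actual hyper-centroid vertex $v''\in C_w(T')$ (the paper's $v'$) so that $\frac{ww_{T'}(w)}{ww_{T'}(v'')}\geq \frac{ww_{T'}(w)}{ww_{T'}(v)}>\frac{ww_T(w)}{ww_T(v)}$, which is exactly the paper's contradiction. One immaterial overclaim: it is not true that ``all other pairwise distances are unchanged'' (distances between the moved branch and other off-path subtrees do shift), but since only distances from $v$ and from $w$ enter $ww_T(v)$ and $ww_T(w)$, this does not affect the argument.
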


\begin{proof}
Assume that $T$ is a tree on $n$ vertices with leaf $w$ and $v\in C_w(T)$ such that $\frac{ww_T(w)}{ww_T(v)}$ is maximized. Let $w=w_0w_1\cdots w_{r-1}=v$ be the $w$-$v$ path in $T$.

Suppose, for contradiction, that $r\geq 2$ and for some $1\leq i\leq r-2$, $w_i$ has a neighbor $u$ different from $w_{i-1}$ and $w_{i+1}$. Let $T'=T-uw_i+uw_{i+1}$, it is easy to see that $ww_{T'}(w)>ww_T(w)$ and $ww_{T'}(v)<ww_T(v)$. Hence
$$ \frac{ww_{T'}(w)}{ww_{T'}(v')} \geq \frac{ww_{T'}(w)}{ww_{T'}(v)}>\frac{ww_T(w)}{ww_T(v)}$$
where $v'$ (possibly equal to $v$) is in $C_w(T')$, a contradiction.
\end{proof}

Now let $T_B$ be the connected component containing $v$ after removing all edges on the $v$-$w$ path. Similar computation as the previous section yields
\begin{equation*}
\frac{ww_T(w)}{ww_T(v)}=1+\frac{2(r-1)w_{T_B}(v)+(r-1)r(n-r)}{\frac{r(r^2-1)}{3}+w_{T_B}(v)+S_{T_B}(v)} .
\end{equation*}

Once again our computation leads to the following:

\begin{question}\label{th2}
If $T$ is a tree of order $n\geq2$ with leaf $w$ and $v\in C_w(T)$, is it true that
\begin{equation}
\frac{ww_T(w)}{ww_T(v)}\leq\frac{-2r^3+3nr^2-3r^2+3nr-r}{r^3-7r+6n},
\end{equation}
where
 $$r=\left\{\begin{array}{ll}
 \lfloor \sqrt{2n}\rfloor-1, &  0\leq s\leq k-4,\\
 \lfloor \sqrt{2n}\rfloor ,  & k-3\leq s\leq 2k,
\end{array}\right.$$
with equality when $T=T(n,r)$?
\end{question}

Table~\ref{table2} presents such extremal trees for small $n$. Note that such extremal structures are not necessarily unique, as can be seen from the case of $n=9$.

\setlength{\unitlength}{.1in}
\begin{table}
\begin{center}
\begin{tabular}{lccc}\hline
n & \mbox{Graph} & \mbox{Value} & \mbox{Structure of extremal trees with order n}\\ \hline
2 & $P_2$ & 1 &
\begin{picture}(15,2)
\linethickness{0.25mm}
\put(0.2,0.4){\line(1,0){3}}
\multiput(0,0)(3,0){2}{$\bullet$}
\end{picture}
\\ \hline
3 & $P_3$ & 2 &
\begin{picture}(15,2)
\linethickness{0.25mm}
\put(0.2,0.4){\line(1,0){6}}
\multiput(0,0)(3,0){3}{$\bullet$}
\end{picture}
\\ \hline
4 & $S_4$ &7/3&
\begin{picture}(15,3)
\linethickness{0.25mm}
\put(0.2,0.4){\line(1,0){6}}
\put(3.3,0.4){\line(0,1){2}}
\multiput(0,0)(3,0){3}{$\bullet$}
\multiput(3,2)(0,0){1}{$\bullet$}
\end{picture}
\\ \hline
5 &$T(5,3)$& 8/3&
\begin{picture}(15,3)
\linethickness{0.25mm}
\put(0.2,0.4){\line(1,0){9}}
\put(3.3,0.4){\line(0,1){2}}
\multiput(0,0)(3,0){4}{$\bullet$}
\multiput(3,2)(0,0){1}{$\bullet$}
\end{picture}
\\ \hline
6 &$T(6,3)$& 22/7&
\begin{picture}(15,3)
\linethickness{0.25mm}
\put(0.2,0.4){\line(1,0){9}}
\put(3.3,0.4){\line(0,1){2}}
\put(3.3,0.4){\line(-1,1){1.5}}
\multiput(0,0)(3,0){4}{$\bullet$}
\multiput(3,2)(0,0){1}{$\bullet$}
\multiput(1.4,1.7)(0,0){1}{$\bullet$}
\end{picture}
\\ \hline
7 &$T(7,3)$&7/2&
\begin{picture}(15,3)
\linethickness{0.25mm}
\put(0.2,0.4){\line(1,0){9}}
\put(3.3,0.4){\line(0,1){2}}
\put(3.3,0.4){\line(-1,1){1.5}}
\put(3.3,0.4){\line(1,1){1.5}}
\multiput(0,0)(3,0){4}{$\bullet$}
\multiput(3,2)(0,0){1}{$\bullet$}
\multiput(1.4,1.7)(0,0){1}{$\bullet$}
\multiput(4.4,1.6)(0,0){1}{$\bullet$}
\end{picture}
\\ \hline
8 &$T(8,3)$& 34/9&
\begin{picture}(15,3)
\linethickness{0.25mm}
\put(0.2,0.4){\line(1,0){9}}
\put(3.3,0.4){\line(0,1){2}}
\put(3.3,0.4){\line(-1,1){1.5}}
\put(3.3,0.4){\line(1,1){1.5}}
\put(3.3,0.4){\line(-2,1){1.9}}
\multiput(0,0)(3,0){4}{$\bullet$}
\multiput(3,2)(0,0){1}{$\bullet$}
\multiput(1.4,1.7)(0,0){1}{$\bullet$}
\multiput(4.4,1.6)(0,0){1}{$\bullet$}
\multiput(1,1.1)(0,0){1}{$\bullet$}
\end{picture}
\\ \hline
9 &$T(9,4),T(9,3)$& 4&
\begin{picture}(15,3)
\linethickness{0.25mm}
\put(0.2,0.4){\line(1,0){8}}
\put(2.3,0.4){\line(0,1){2}}
\put(2.3,0.4){\line(-1,1){1.4}}
\put(2.3,0.4){\line(1,1){1.5}}
\put(2.3,0.4){\line(-2,1){1.6}}
\multiput(0,0)(2,0){5}{$\bullet$}
\multiput(2,2)(0,0){1}{$\bullet$}
\multiput(0.7,1.5)(0,0){1}{$\bullet$}
\multiput(0.3,1)(0,0){1}{$\bullet$}
\multiput(3.4,1.6)(0,0){1}{$\bullet$}
\put(10,0.4){\line(1,0){6}}
\multiput(9.5,0.1)(2,0){4}{$\bullet$}
\put(11.8,0.4){\line(0,1){2}}
\put(11.8,0.4){\line(-1,1){1.4}}
\put(11.8,0.4){\line(1,1){1.4}}
\put(11.8,0.4){\line(-2,1){1.6}}
\put(11.8,0.4){\line(2,1){1.7}}
\multiput(11.5,2)(0,0){1}{$\bullet$}
\multiput(9.8,1)(0,0){1}{$\bullet$}
\multiput(10.1,1.6)(0,0){1}{$\bullet$}
\multiput(13.3,0.9)(0,0){1}{$\bullet$}
\multiput(13.1,1.6)(0,0){1}{$\bullet$}
\end{picture}
\\ \hline
10 &$T(10,4)$& 35/8&
\begin{picture}(15,3)
\linethickness{0.25mm}
\put(0.2,0.4){\line(1,0){12}}
\put(3.3,0.4){\line(0,1){2}}
\put(3.3,0.4){\line(-1,1){1.5}}
\put(3.3,0.4){\line(1,1){1.5}}
\put(3.3,0.4){\line(-2,1){1.9}}
\put(3.3,0.4){\line(2,1){1.9}}
\multiput(0,0)(3,0){5}{$\bullet$}
\multiput(3,2)(0,0){1}{$\bullet$}
\multiput(1.4,1.7)(0,0){1}{$\bullet$}
\multiput(1,1.1)(0,0){1}{$\bullet$}
\multiput(4.4,1.6)(0,0){1}{$\bullet$}
\multiput(4.7,1)(0,0){1}{$\bullet$}
\end{picture}
\\ \hline
\end{tabular}\\
\end{center}
\caption{The extremal trees of order $n$ with maximum value of $\frac{ww_T(w)}{ww_T(v)}$.}\label{table2}
\end{table}
\vspace{0.2cm}

\begin{remark}
It should be noted that, for small values of $n$, the extremal trees proposed (and confirmed by our computational results) in Questions~\ref{th1} and \ref{th2} are exactly the same as those found with respect to the Wiener index in \cite{Barefoot1997}.
\end{remark}

We skip the similar details of our unsuccessful attempt at finding the minimum $ww_T(w)/ww_T(v)$, leaving the following question:

\begin{question}
Let $T$ be a tree of order $n\geq7$ with leaf $w$ and $v\in C_w(T)$, is the minimum value of $\frac{ww_T(w)}{ww_T(v)}$ achieved by the tree $T$ formed from a path with a pendent edge in the ``middle''?
\end{question}

We use Table~\ref{table3} to provide such extremal structures for small $n$. Note that when $n=4$ and $n=6$, $P_4$ and $T(6,4)$ minimize  $\frac{ww_T(w)}{ww_T(v)}$, respectively.

\setlength{\unitlength}{.1in}
\begin{table}
\begin{center}
\begin{tabular}{lcc}\hline
n & \mbox{Value} & \mbox{Extremal structures with order n}\\ \hline
2 & 1 &
\begin{picture}(15,2)
\linethickness{0.25mm}
\put(0.2,0.4){\line(1,0){3}}
\multiput(0,0)(3,0){2}{$\bullet$}
\end{picture}
\\ \hline
3 & 2 &
\begin{picture}(15,2)
\linethickness{0.25mm}
\put(0.2,0.4){\line(1,0){6}}
\multiput(0,0)(3,0){3}{$\bullet$}
\end{picture}
\\ \hline
4 &2&
\begin{picture}(15,3)
\linethickness{0.25mm}
\put(0.2,0.4){\line(1,0){9}}
\multiput(0,0)(3,0){4}{$\bullet$}
\end{picture}
\\ \hline
5& 13/6&
\begin{picture}(15,3)
\linethickness{0.25mm}
\put(0.2,0.4){\line(1,0){9}}
\put(3.3,0.4){\line(0,1){2}}
\multiput(0,0)(3,0){4}{$\bullet$}
\multiput(3,2)(0,0){1}{$\bullet$}
\end{picture}
\\ \hline
6 & 23/11&
\begin{picture}(15,3)
\linethickness{0.25mm}
\put(0.2,0.4){\line(1,0){12}}
\put(3.3,0.4){\line(0,1){2}}
\multiput(0,0)(3,0){5}{$\bullet$}
\multiput(3,2)(0,0){1}{$\bullet$}
\end{picture}
\\ \hline
7&29/15&
\begin{picture}(15,3)
\linethickness{0.25mm}
\put(0.2,0.4){\line(1,0){15}}
\put(6.3,0.4){\line(0,1){2}}
\multiput(0,0)(3,0){6}{$\bullet$}
\multiput(6,2)(0,0){1}{$\bullet$}
\end{picture}
\\ \hline
8& 39/21&
\begin{picture}(15,3)
\linethickness{0.25mm}
\put(0.2,0.4){\line(1,0){12}}
\put(6.3,0.4){\line(0,1){2}}
\multiput(0,0)(2,0){7}{$\bullet$}
\multiput(6,2)(0,0){1}{$\bullet$}
\end{picture}
\\ \hline
9 & 54/31&
\begin{picture}(15,3)
\linethickness{0.25mm}
\put(0.2,0.4){\line(1,0){14}}
\put(6.3,0.4){\line(0,1){2}}
\multiput(0,0)(2,0){8}{$\bullet$}
\multiput(6,2)(0,0){1}{$\bullet$}
\end{picture}
\\ \hline
10 & 69/41&
\begin{picture}(15,3)
\linethickness{0.25mm}
\put(0.2,0.4){\line(1,0){16}}
\put(8.3,0.4){\line(0,1){2}}
\multiput(0,0)(2,0){9}{$\bullet$}
\multiput(8,2)(0,0){1}{$\bullet$}
\end{picture}
\\ \hline
\end{tabular}\\
\caption{~~The extremal trees of order $n$ with minimum value of $\frac{ww_T(w)}{ww_T(v)}$.}\label{table3}
\end{center}
\end{table}\vspace{0.2cm}

\section{Concluding remark and other observations}
We have explored questions on $ww_T(.)$ similar to those studied for $w_T(.)$. It appears that such questions are generally more complicated than their analogues with respect to $w_T(.)$. This can also be seen from the following attempt to generalize Jordan's Theorem~\ref{th3}. The proof is simple but the statement is certainly not as neat as that of Jordan's.

\begin{proposition}
Let $v$ be in the hyper centroid of a tree $T$ of order $n\geq3$, $v_1, v_2, \cdots, v_l$ be the neighbors of $v$ and
 $T_1,T_2,\cdots,T_l$ be the connected components of $T-v$
with orders $n_1,n_2,\cdots,$ $n_l$, respectively.
Then for each $1\leq i\leq l$, the inequality
$$\sum\limits_{y\in V(T-T_i)}d_T(v,y)+n-n_i\geq \sum\limits_{x\in V(T_i)}d_T(v,x)$$
holds.
In addition,
$$n_i\leq\frac{1}{2}\sum\limits_{y\in V(T-T_i)}d_T(v,y)+\frac{n}{2}$$
if $\sum\limits_{x\in V(T_i)}d_T(v,x)\geq n_i$.
\end{proposition}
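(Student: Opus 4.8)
The plan is to derive both inequalities directly from Theorem~\ref{lemma3}, applied to the pair consisting of $v$ and each of its neighbors $v_i$. First I would set $u = v_i$ in that theorem, so that the component $T_{v_i}$ containing $v_i$ in $T - vv_i$ is exactly $T_i$, while the component $T_v$ containing $v$ is $T - T_i$; consequently $n_{vv_i}(v) = n - n_i$ and $n_{vv_i}(v_i) = n_i$.

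The one piece of bookkeeping is to re-express the local distance functions $w_{T_v}(v)$ and $w_{T_{v_i}}(v_i)$, which are computed inside the components, in terms of distances measured in the whole tree $T$. Since removing the edge $vv_i$ does not alter the unique path between two vertices lying in the same component, I would observe that $w_{T_v}(v) = \sum_{y \in V(T-T_i)} d_T(v,y)$. For the other term, every $x \in V(T_i)$ satisfies $d_T(v,x) = d_T(v_i,x) + 1$, so that $w_{T_{v_i}}(v_i) = \sum_{x\in V(T_i)} d_T(v_i,x) = \sum_{x\in V(T_i)} d_T(v,x) - n_i$.

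Now, because $v \in C_w(T)$, the inequality $n_{vv_i}(v) + w_{T_v}(v) \geq n_{vv_i}(v_i) + w_{T_{v_i}}(v_i)$ from Theorem~\ref{lemma3} holds. Substituting the four quantities above, the $n_i$ terms cancel on the right, and I obtain exactly
$$\sum_{y \in V(T-T_i)} d_T(v,y) + (n - n_i) \geq \sum_{x \in V(T_i)} d_T(v,x),$$
which is the first claimed inequality. The second statement is then purely elementary: under the hypothesis $\sum_{x\in V(T_i)} d_T(v,x) \geq n_i$, chaining it with the first inequality gives $\sum_{y\in V(T-T_i)} d_T(v,y) + n - n_i \geq n_i$, and rearranging yields $n_i \leq \frac{1}{2}\sum_{y\in V(T-T_i)} d_T(v,y) + \frac{n}{2}$.

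There is no serious obstacle here; the proposition is essentially a restatement of the hyper-centroid inequality of Theorem~\ref{lemma3}, with the component-distance functions unfolded into global distances. The only point requiring care is the index/sign accounting in the shift $d_T(v,x) = d_T(v_i,x)+1$ together with the identification of $n_{vv_i}(\cdot)$ with the component sizes; getting these right is precisely what makes the $n_i$ terms cancel cleanly and produces the stated form. This also explains the asymmetry flagged in the concluding remark: unlike Jordan's clean branch-weight bound, the extra hypothesis $\sum_{x\in V(T_i)} d_T(v,x) \geq n_i$ is exactly the condition needed to push the first inequality through to the bound on $n_i$.
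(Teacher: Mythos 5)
Your proof is correct and is essentially the paper's argument: the paper simply inlines the distance-shift computation ($d_T(v_i,x)=d_T(v,x)-1$ on $T_i$ and $d_T(v_i,y)=d_T(v,y)+1$ off it) to get $ww_T(v_i)=ww_T(v)+2\sum_{y\in V(T-T_i)}d_T(v,y)-2\sum_{x\in V(T_i)}d_T(v,x)+2(n-n_i)$ and then uses $ww_T(v)\leq ww_T(v_i)$, which is exactly the content of Theorem~\ref{lemma3} that you invoke, with the same identification of components and the same cancellation of the $n_i$ terms. Your handling of the second inequality by chaining the hypothesis $\sum_{x\in V(T_i)}d_T(v,x)\geq n_i$ with the first inequality also matches the paper.
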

\begin{proof}
Note that for any $i$, $ww_T(v)\leq ww_T(v_i)$ and
\begin{align*}
ww_T(v_i)&=\sum\limits_{z\in V(T)}[d_T(v_i,z)+d^2_T(v_i,z)]\\
&=\sum\limits_{x\in V(T_i)}[d_T(v_i,x)+d^2_T(v_i,x)]+\sum\limits_{y\in V(T-T_i)}[d_T(v_i,y)+d^2_T(v_i,y)]\\
&=\sum\limits_{x\in V(T_i)}[d_T(v,x)-1+(d_T(v,x)-1)^2]+\sum\limits_{y\in V(T-T_i)}[d_T(v,y)+1+(d_T(v,y)+1)^2]\\
&=\sum\limits_{x\in V(T_i)}[d^2_T(v,x)-d_T(v,x)]+\sum\limits_{y\in V(T-T_i)}[d^2_T(v,y)+3d_T(v,y)+2]\\
&=ww_T(v)+2\sum\limits_{y\in V(T-T_i)}d_T(v,y)-2\sum\limits_{x\in V(T_i)}d_T(v,x)+2(n-n_i) .
\end{align*}
Hence $$\sum\limits_{y\in V(T-T_i)}d_T(v,y)+n-n_i\geq \sum\limits_{x\in V(T_i)}d_T(v,x) . $$
In addition, if $\sum\limits_{x\in V(T_i)}d_T(v,x)\geq n_i$, we have
$$n_i\leq\frac{1}{2}\sum\limits_{y\in V(T-T_i)}d_T(v,y)+\frac{n}{2} . $$
\end{proof}

It is also rather straightforward to deduce a recursive formula for $ww_T(.)$.
For a tree $T$ with order $n\geq3$ with root $v$ of degree $k$, let $v_i$,
 $T_i$ and $n_i$ be defined as before for $1 \leq i \leq k$. Then we have

\begin{proposition}\label{lemma7}
 Let $T$ be a  tree with order $n\geq3$, whose structure is described as above. Then
 $$ww_T(v)=2(n-1)+2\sum\limits_{i=1}^kw_{T_i}(v_i)+\sum\limits_{i=1}^kww_{T_i}(v_i) .$$
\end{proposition}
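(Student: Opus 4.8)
The plan is to compute $ww_T(v)$ directly from its definition by partitioning $V(T)$ according to which component $T_i$ each vertex belongs to. Removing the root $v$ splits the remaining $n-1$ vertices into the subtrees $T_1,\dots,T_k$, so I would write
$$ww_T(v)=\sum_{u\in V(T)}\bigl(d_T^2(v,u)+d_T(v,u)\bigr)=\sum_{i=1}^k\sum_{u\in V(T_i)}\bigl(d_T^2(v,u)+d_T(v,u)\bigr),$$
the term at $u=v$ vanishing. The single nontrivial ingredient is the distance decomposition: every $v$-$u$ path with $u\in V(T_i)$ passes through the neighbor $v_i$, so $d_T(v,u)=1+d_{T_i}(v_i,u)$. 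Here it is important that $d_{T_i}(v_i,u)=d_T(v_i,u)$, which holds because $T_i$ is a subtree of $T$ containing the unique $v_i$-$u$ path, hence distances measured inside $T_i$ agree with those in $T$.

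Next, for a fixed $i$ and $u\in V(T_i)$ I would set $d:=d_{T_i}(v_i,u)$ and substitute $d_T(v,u)=d+1$ to obtain the elementary identity
$$d_T^2(v,u)+d_T(v,u)=(d+1)^2+(d+1)=(d^2+d)+2d+2.$$
Summing this over $u\in V(T_i)$, the $d^2+d$ part sums to $ww_{T_i}(v_i)$ by definition, the $2d$ part sums to $2w_{T_i}(v_i)$, and the constant $2$ sums to $2n_i$. Thus each component contributes $ww_{T_i}(v_i)+2w_{T_i}(v_i)+2n_i$.

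Finally I would sum over $i=1,\dots,k$ and use $\sum_{i=1}^k n_i=n-1$, since every vertex other than $v$ lies in exactly one $T_i$; this yields $ww_T(v)=2(n-1)+2\sum_{i=1}^k w_{T_i}(v_i)+\sum_{i=1}^k ww_{T_i}(v_i)$, as claimed. I do not expect any genuine obstacle: the result is a bookkeeping computation, and the only step deserving a moment's care is verifying that the within-component distances $d_{T_i}(v_i,u)$ coincide with $d_T(v_i,u)$, which is immediate from the tree structure. This mirrors exactly the distance-shift computations already carried out in the proofs of Proposition~\ref{lemma1} and Theorem~\ref{lemma3}, so the same technique applies verbatim.
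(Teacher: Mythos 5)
Your proposal is correct and matches the paper's own proof essentially verbatim: both partition $V(T)\setminus\{v\}$ into the components $T_i$, substitute $d_T(v,x)=1+d_{T_i}(v_i,x)$, expand to $d^2(v_i,x)+3d(v_i,x)+2$, and sum each component's contribution $ww_{T_i}(v_i)+2w_{T_i}(v_i)+2n_i$ using $\sum_i n_i=n-1$. Your extra remark that distances within $T_i$ agree with those in $T$ is a point the paper leaves implicit, but the argument is the same.
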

\begin{proof}
For each $1\leq i\leq k$,
\begin{align*}
\sum\limits_{x\in T_i}(d(v,x)+d^2(v,x))&=\sum\limits_{x\in T_i}[1+d(v_i,x)+(1+d(v_i,x))^2]\\
&=\sum\limits_{x\in T_i}[2+3d(v_i,x)+d^2(v_i,x)]\\
&=2n_i+2w_{T_i}(v_i)+ww_{T_i}(v_i) .
\end{align*}
Hence
\begin{align*}
ww_T(r)&=\sum\limits_{i=1}^k\sum\limits_{x\in T_i}(d(v,x)+d^2(v,x))\\
&=\sum\limits_{i=1}^k[2n_i+2w_{T_i}(v_i)+ww_{T_i}(v_i)]\\
&=2(n-1)+2\sum\limits_{i=1}^kw_{T_i}(v_i)+\sum\limits_{i=1}^kww_{T_i}(v_i) .
\end{align*}
\end{proof}

\end {document}